\DeclareMathOperator{\val}{val}
\DeclareMathOperator{\ord}{ord}
\newcommand{\BP}{{\mathbf P}}
\newcommand{\BZ}{{\mathbf Z}}
\newcommand{\frakd}{{\mathfrak d}}
\newcommand{\frakD}{{\mathfrak D}}
\newcommand{\frakp}{{\mathfrak p}}
\newcommand{\frakq}{{\mathfrak q}}
\newcommand\lowtilde{\lower0.7ex\hbox{~}}
\newcommand{\mybar}[1]{#1\llap{$\overline{\phantom{\rm#1}}$}}
\renewcommand{\bar}{\mybar}
\newcommand{\nb}{\bar{n}}
\newcommand{\Cb}{\bar{C}}
\newcommand{\Db}{\bar{D}}
\newcommand{\Gb}{\bar{G}}
\newcommand{\nbar}{{\bar{\scriptstyle n}}}
\newcommand{\alphab}{\overline{\alpha}}
\newcommand{\betab}{\overline{\beta}}
\newcommand{\diffC}{{\frakd}_C}
\newcommand{\diffD}{{\frakd}_D}
\newcommand{\discC}{{\frakD}_C}
\newcommand{\discD}{{\frakD}_D}
\newtheorem{theorem}{Theorem}
\newtheorem{lemma}[theorem]{Lemma}
\theoremstyle{remark}
\newtheorem*{remark}{Remark}
\begin{document}

\title[Automorphisms of hyperelliptic curves]
      {Characteristic polynomials of\\ 
       automorphisms of hyperelliptic curves}

\author{Robert M. Guralnick}
\address{Department of Mathematics, 
         University of Southern California, 
         Los Angeles, CA 90089-2532, USA.}
\email{guralnic@usc.edu}
\urladdr{http://www-rcf.usc.edu/\lowtilde{}guralnic/}

\author{Everett W.~Howe}
\address{Center for Communications Research, 
         4320 Westerra Court, 
         San Diego, CA 92121-1967, USA.}
\email{however@alumni.caltech.edu}
\urladdr{http://alumni.caltech.edu/\lowtilde{}however/}

\date{2 April 2008}

\thanks{The first author was partially supported by
        NSF grant DMS 0653873.}
\keywords{Automorphism, hyperelliptic curve, characteristic polynomial}

\subjclass[2000]{Primary 14H37; Secondary 14H40}


\begin{abstract}
Let $\alpha$ be an automorphism of a hyperelliptic curve $C$ of genus
$g$ and let $\alphab$ be the automorphism induced by $\alpha$ on the 
genus-$0$ quotient of $C$ by the hyperelliptic involution.
Let $n$ be the order of $\alpha$ and let $\nb$ be the order 
of~$\alphab$.  We show that the characteristic polynomial $f$ of the
automorphism $\alpha^*$ of the Jacobian of $C$ is determined by the 
values of $n$, $\nb$, and $g$, unless $n=\nb$, $\nb$ is even, and 
$(2g+2)/n$ is even, in which case there are two possibilities for~$f$. 
In every case we give explicit formulas for the possible 
characteristic polynomials.
\end{abstract}

\maketitle

\section{Introduction}
\label{S-intro}

Let $\alpha$ be an automorphism of a genus-$g$ curve $C$ over a
field~$k$ and let $\alpha^*$ be the corresponding automorphism of the
Jacobian of~$C$.  Let $n$ be the order of $\alpha$ and let $f$ be the 
characteristic polynomial of $\alpha^*$.  The values of $n$ and $g$ 
provide some restrictions on the possible values of $f$, but in general
they do not determine~$f$; for example, a nontrivial involution of a 
genus-$3$ curve can have characteristic polynomial equal to 
$(x-1)^i(x+1)^{6-i}$ for $i\in\{0,2,4\}$, and all three possibilities
occur.

If $C$ is hyperelliptic, with hyperelliptic involution $\iota$, 
then the automorphism $\alpha$ gives rise to an automorphism $\alphab$
of the genus-$0$ quotient $C/\langle\iota\rangle$.  Let $\nb$ be the
order of $\alphab$, so that either $n=\nb$ or $n=2\nb$.  The triple
$(g,n,\nb)$ still does not in general determine $f$: If $C$ has genus
$3$ and $\alpha$ and $\alphab$ each have order $2$, then $f$ can be
either $(x-1)^2(x+1)^4$ or $(x-1)^4(x+1)^2$, and both possibilities
occur.

The purpose of this note is to show that if $C$ is hyperelliptic, this
ambiguity between two possible characteristic polynomials is the worst
that can happen; furthermore, the triple $(g,n,\nb)$ determines $f$
completely unless $n=\nb$, $n$ is even, and $(2g+2)/n$ is an even
integer.

\begin{theorem}
\label{T:main}
Let $C$ be a hyperelliptic curve of genus $g$ over a field $k$ 
and let $\alpha$, $\alphab$, $n$, $\nb$, and $f$ be as above.  
\begin{enumerate}
\item\label{itemb} If $\nb$ is odd and $n=\nb$, then 
     $2g\equiv 0, -1,\text{\ or\ } -2\bmod \nb$, and
     \[ f = \begin{cases}
            \displaystyle \frac{(x^{\nbar} - 1)^{(2g+2)/\nbar} }{ (x - 1)^2} & \text{if $2g\equiv-2\bmod \nb$}\textup{;}\\[1em]
            \displaystyle \frac{(x^{\nbar} - 1)^{(2g+1)/\nbar} }{ (x - 1)  } & \text{if $2g\equiv-1\bmod \nb$}\textup{;}\\[1em]
                                (x^{\nbar} - 1)^{2g/\nbar}                   & \text{if $2g\equiv 0\bmod \nb$}.\\                  
            \end{cases}
     \]
\item\label{itema} If $\nb$ is odd and $n=2\nb$, then 
     $2g\equiv 0, -1,\text{\ or\ } -2\bmod \nb$, and
     \[ f = \begin{cases}
            \displaystyle \frac{(x^{\nbar} + 1)^{(2g+2)/\nbar} }{ (x + 1)^2} & \text{if $2g\equiv-2\bmod \nb$}\textup{;}\\[1em]
            \displaystyle \frac{(x^{\nbar} + 1)^{(2g+1)/\nbar} }{ (x + 1)  } & \text{if $2g\equiv-1\bmod \nb$}\textup{;}\\[1em]
                                (x^{\nbar} + 1)^{2g/\nbar}                   & \text{if $2g\equiv 0\bmod \nb$}.\\                  
            \end{cases}
     \]
\item\label{itemd} If $\nb$ is even and $n=\nb$, then 
     $2g\equiv-2\bmod \nb$.  Furthermore\textup{:}
     \begin{enumerate}
     \item\label{itemd1} if $(2g+2)/\nb$ is odd, then
          \[ f = \frac{(x^{\nbar} - 1)^{(2g+2)/\nbar} }{ (x^2 - 1)};
          \]
     \item\label{itemd2} if $(2g+2)/\nb$ is even, then
          \[ f = \frac{(x^{\nbar} - 1)^{(2g+2)/\nbar} }{ (x - 1)^2}
             \text{\quad or\quad }
             f = \frac{(x^{\nbar} - 1)^{(2g+2)/\nbar} }{ (x + 1)^2}.
          \]
     \end{enumerate}
\item\label{itemc} If $\nb$ is even and $n=2\nb$, then 
     $2g\equiv 0\bmod \nb$ and 
     \[ f = (x^\nbar+1)^{2g/\nbar}.\]
\end{enumerate}
\end{theorem}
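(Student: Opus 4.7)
\emph{Setup.} Extending scalars, we may assume $k$ is algebraically closed. For $g \ge 2$, the hyperelliptic involution $\iota$ is central in $\mathrm{Aut}(C)$, so $\alpha$ commutes with $\iota$ and descends to $\alphab$ on $\BP^1 = C/\langle\iota\rangle$. If $\nb > 1$, we conjugate on $\BP^1$ so that $\alphab$ sends $x$ to $\zeta x$ for a primitive $\nb$-th root of unity $\zeta$, fixing $0$ and $\infty$. Write $W \subset \BP^1$ for the Weierstrass branch locus of $C \to \BP^1$, put $r := |W \cap \{0,\infty\}|$, and let $s$ be the number of free $\alphab$-orbits in $W$. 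Then $r + s\nb = 2g+2$ with $r \in \{0,1,2\}$, which immediately yields the asserted congruence on $2g$ modulo $\nb$ in each of the four cases.

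\emph{Traces of powers of $\alpha$.} Write $C : y^2 = F(x)$ with $F$ a polynomial whose zero locus, together with $\infty$ if $\deg F$ is odd, is $W$. Every lift of $\alphab$ to an automorphism of $C$ has the form $(x,y) \mapsto (\zeta x,\,c\,y)$ with $c$ satisfying $c^2 = F(\zeta x)/F(x)$, a specific power of $\zeta$ depending on $r$ and $\deg F$; the two choices of $c$ differ by $\iota$, and the prescribed order $n$ selects one of them. For $1 \le j < n$, count $\#\mathrm{Fix}(\alpha^j)$ by examining the fibers over each fixed point of $\alphab^j$ on $\BP^1$: a Weierstrass fixed point contributes $1$, a non-Weierstrass fixed point contributes $2$ or $0$ according as the local scalar of $\alpha^j$ on the fiber is $+1$ or $-1$, and when $\alpha^j = \iota$ (that is, $\nb \mid j$ and $n \nmid j$) the count is $2g+2$. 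The Lefschetz trace formula applied to $\ell$-adic cohomology then yields $\mathrm{tr}(\alpha^{j*} \mid V_\ell J) = 2 - \#\mathrm{Fix}(\alpha^j)$ for each $\alpha^j \neq 1$.

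\emph{Reconstruction and the ambiguity in case~(3).} Because $(\alpha^*)^n = 1$ on $V_\ell J$, the character just computed determines $f$. Matching this character term by term against cyclotomic decompositions of the candidate polynomials establishes the asserted formula in cases (1), (2), (3a), and (4). In case (3) there are two valid lifts of $\alphab$ of order $\nb$, namely $\alpha$ and $\alpha\iota$; since $\iota^*$ acts as $-1$ on $V_\ell J$, their characteristic polynomials are related by $f_{\alpha\iota}(x) = f_\alpha(-x)$. When $(2g+2)/\nb$ is odd these coincide, giving the single polynomial of (3a); when it is even they are genuinely distinct, producing the two possibilities of (3b).

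\emph{Main obstacle.} The key technical point is the local scalar calculation at the two distinguished fixed points: the scalar acting on the fiber over $\infty$ differs from that over $0$ by a factor of $\zeta^{-j(g+1)}$ coming from the chart transition, so whether a non-Weierstrass fixed point above $\infty$ contributes to $\#\mathrm{Fix}(\alpha^j)$ depends on a parity condition involving $j$, $g+1$, and $\nb$. Managing this parity carefully across all subcases indexed by $r$, $j$, and $\nb$ is what drives the case breakdown of the theorem and produces the ambiguity in case~(3b) that cannot be resolved from the data $(g, n, \nb)$ alone.
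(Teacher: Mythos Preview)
Your strategy---count fixed points of $\alpha^j$ on an explicit model and recover the traces of $\alpha^{j*}$ on $H^1$ via the Lefschetz fixed-point formula---is genuinely different from the paper's. The paper never writes down $y^2=F(x)$; instead it computes, for each divisor $d\mid n$, the genus $h_d$ of $C/\langle\alpha^d\rangle$ by a Riemann--Hurwitz analysis of the tower $C\to C/\langle\alpha^d\rangle\to \Cb\to\Db$ (their Lemma~\ref{L:genera}). Since $2h_d$ equals the multiplicity $M_d$ of $1$ as a root of the characteristic polynomial of $(\alpha^d)^*$, the collection $\{M_d\}_{d\mid n}$ determines $f$ by M\"obius inversion over the cyclotomic factors. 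Your trace method and their quotient-genus method are equivalent in the tame case (indeed $M_d=\sum_{j}\mathrm{tr}(\alpha^{jd*})/[n/d]$ unwinds to the same information), and your approach is arguably more direct when it applies.

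However, your argument has two genuine gaps in positive characteristic, both of which the paper handles explicitly. First, in characteristic~$2$ the model $y^2=F(x)$ is unavailable, so your fixed-point count over Weierstrass vs.\ non-Weierstrass fibers does not even get off the ground; the paper treats this case separately via an Artin--Schreier analysis and a careful computation of higher ramification differents. Second, and more subtly, you assume $\alphab$ can be diagonalized as $x\mapsto\zeta x$. This fails precisely when $p\mid\nb$: then $\nb=p$ and $\alphab$ is conjugate to $x\mapsto x+1$, with a single wildly ramified fixed point at $\infty$. In that situation the Lefschetz formula no longer reads $\mathrm{tr}(\alpha^{j*})=2-\#\mathrm{Fix}(\alpha^j)$, because the local index at a wild fixed point exceeds~$1$ (already on $\BP^1$ the map $x\mapsto x+1$ has Lefschetz number~$2$ but only one fixed point). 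The paper's approach sidesteps this entirely: Riemann--Hurwitz with the correct different still computes the quotient genera, and their Lemma~\ref{L:genera} is stated and proved to cover the wild cases. To complete your argument you would need either to compute the wild local terms in Lefschetz--Verdier, or to treat the cases $p=2$ and $p=\nb$ by a separate method.
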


\begin{remark}
Note that in Statements~\eqref{itemb} and~\eqref{itema} of the theorem,
if $\nb=1$ then the three expressions in the equality for $f$ are all
the same.
\end{remark}

\begin{remark}
The ambiguity in Statement~\eqref{itemd2} is unavoidable.  Suppose
$\alpha$ is an automorphism of $C$ for which $n=\nb$, $\nb$ is even,
and $(2g+2)/\nb$ is even.  Then $\alpha$ and $\iota\alpha$ give the
same values of $n$ and $\nb$, but they have different characteristic
polynomials.
\end{remark}

One of our motivations for the work in this paper was Proposition~13.1
of~\cite{HNR}, which is concerned with automorphisms $\alpha$ of 
supersingular genus-$2$ curves $C$ over finite fields of characteristic
at least~$5$.  The proposition says in part that if $\alpha$ is such an
automorphism, and if $n$ and $\nb$ are as defined above, then the pair
$(n,\nb)$ appears in the left-hand column of 
Table~\ref{Table:CharPolys}, and the characteristic polynomial of
$\alpha^*$ is as given in the right-hand column of the table.  
Here we note that Theorem~\ref{T:main} shows that the same conclusion
holds for automorphisms of arbitrary genus-$2$ curves over arbitrary
fields, with the restrictions on the values of $n$ and $\nb$ coming 
from the congruence conditions in the theorem.

\begin{table}
\begin{center}
\renewcommand\arraystretch{1.2}
\begin{tabular}{|c|c|}
\hline
$(n,\nb)$&                                \\ \hline\hline
$(1,1)$  & $(x - 1)^4$                    \\ \hline
$(2,1)$  & $(x + 1)^4$                    \\ \hline
$(2,2)$  & $(x - 1)^2(x + 1)^2$           \\ \hline
$(3,3)$  & $(x^2 + x + 1)^2$              \\ \hline
$(4,2)$  & $(x^2 + 1)^2$                  \\ \hline
$(5,5)$  & $x^4 + x^3 + x^2 + x + 1$      \\ \hline
$(6,3)$  & $(x^2 - x + 1)^2$              \\ \hline
$(6,6)$  & $(x^2 - x + 1)(x^2 + x + 1)$   \\ \hline
$(8,4)$  & $x^4 + 1$                      \\ \hline
$(10,5)$ & $x^4 - x^3 + x^2 - x + 1$      \\ \hline
\end{tabular}
\end{center}
\vspace{1ex}
\caption{Characteristic polynomials associated to possible values
of $n$ and $\overline{n}$ for genus-$2$ curves~\cite{HNR}*{Table 4}.}
\label{Table:CharPolys}
\end{table}

In Section~\ref{S:quotients} we prove two lemmas about quotients of
hyperelliptic curves by cyclic groups.  In Section~\ref{S:proof} we
use these lemmas to prove Theorem~\ref{T:main}.

\subsubsection*{Conventions}
In this paper, a \emph{curve} will always mean a 
geometrically-irreducible one-dimensional nonsingular scheme over a
field $k$; by the usual equivalence of categories, we could just as 
well phrase the entire paper in terms of one-dimensional function 
fields over~$k$.  When we speak of the projective line $\BP^1$ over a 
field~$k$, we will usually pick without comment a generator $x$ of its
function field, so that we can identify the function field with $k(x)$.

\section{Quotients of hyperelliptic curves}
\label{S:quotients}

Our proof of Theorem~\ref{T:main} will depend on two lemmas concerning
quotients of hyperelliptic curves, which we state and prove in this
section.  Throughout this section, $C$ will be a hyperelliptic curve
over an algebraically-closed field $k$, $\iota$ will be the 
hyperelliptic involution on $C$, and $\beta$ will be an automorphism
of $C$ of order $m$ such that $\iota\not\in\langle\beta\rangle$.

Let $D$ be the quotient of $C$ by the group $\langle\beta\rangle$.
Since $\iota$ is a central element of the automorphism group of $C$,
the automorphism $\beta$ induces an automorphism $\betab$ on the 
genus-$0$ curve $\Cb := C/\langle\iota\rangle$, and we get a diagram
\begin{equation}
\label{EQ:prop}
\xymatrix{
C \ar[d]^2 \ar[rr]^{\langle\beta\rangle} &&  D\ar[d]^2\\
\Cb\ar[rr]^{\langle\betab\rangle}    && \Db
}
\end{equation}
where the vertical arrows are quotients by $\langle\iota\rangle.$
Let $\varphi$ be the map from $C$ to $\Db$ and let $\psi$ be the map
from $\Cb$ to $\Db$.  We see that $\varphi$ and $\psi$ are both Galois
covers; the Galois group $G$ of $\varphi$ is generated by $\beta$ and 
$\iota$ and is isomorphic to $(\BZ/m\BZ)\times(\BZ/2\BZ)$, and the 
Galois group $\Gb$ of $\psi$ is generated by $\betab$ and is cyclic of
order $m$.  Note that $\Gb$ is the quotient of $G$ by 
$\langle\iota\rangle$.

\begin{lemma}
\label{L:ramgroups}
Let $Q$ be a point of $\Db$ and let $H$ be the 
inertia group of $Q$ in the cover~$\varphi$.
\begin{enumerate}
\item\label{modd}
      If $m$ is odd then $H$ is either the trivial group,
      the group $\langle\iota\rangle$,
      the group $\langle\beta\rangle$, or all of~$G$.
\item\label{meven}
      If $m$ is even and the characteristic of $k$ is not $2$, then
      $H$ is either the trivial group, the group $\langle\iota\rangle$,
      the group $\langle\beta\rangle$, or 
      the group~$\langle\iota\beta\rangle$.
\end{enumerate}
\end{lemma}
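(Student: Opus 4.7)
The approach I would take is to exploit the factorization $\varphi\colon C \to \Cb \to \Db$ in diagram~\eqref{EQ:prop} to reduce the question to two simpler ones. Given $Q \in \Db$ with inertia $H$, I pick $P \in C$ above $Q$ and let $R$ be its image in $\Cb$; since $G$ is abelian, every point above $Q$ has inertia $H$. The inertia of $P$ in the degree-$2$ cover $C \to \Cb$ is $H \cap \langle\iota\rangle$, which thus lies in $\{\{1\},\langle\iota\rangle\}$; and the image $\bar H$ of $H$ under the projection $G \to \Gb$ is the inertia of $R$ in $\psi$, with $H/(H\cap\langle\iota\rangle) \cong \bar H$.

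The crux of the argument is to show $\bar H \in \{\{1\}, \Gb\}$. Since $\Cb \cong \BP^1$ and $\Gb$ is a cyclic order-$m$ subgroup of $PGL_2(k)$, I would distinguish two cases. If $\mathrm{char}(k) \nmid m$, the $\Gb$-action is diagonalizable: in suitable coordinates $\betab\colon u\mapsto\zeta u$ for a primitive $m$-th root of unity $\zeta$, so $\psi$ is $u\mapsto u^m$, branched only at $0$ and $\infty$ with full inertia. If $p:=\mathrm{char}(k)\mid m$, the structure theory of finite $p$-subgroups of $PGL_2(k)$ (cyclic ones have order at most $p$), combined with a short centralizer argument in $PGL_2(k)$ (no nontrivial element of order prime to $p$ commutes with a nontrivial translation in characteristic $p$), forces $m=p$ and makes $\Gb$ act by translations with a unique totally ramified fixed point at infinity. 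Either way, the inertia of any point in $\psi$ is trivial or all of $\Gb$.

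Enumerating the four combinations of $(H\cap\langle\iota\rangle,\bar H)$: the combinations $(\{1\},\{1\})$, $(\langle\iota\rangle,\{1\})$, and $(\langle\iota\rangle,\Gb)$ yield $H = \{1\}$, $\langle\iota\rangle$, and $G$ respectively. The remaining combination $(\{1\},\Gb)$ requires $H$ to be a cyclic order-$m$ lift of $\Gb$ in $G$; a direct computation shows such a lift has the form $\langle\iota^a\beta\rangle$ with $am$ even. When $m$ is odd this forces $a=0$, giving $\langle\beta\rangle$, and we recover exactly the four groups of~\eqref{modd}. When $m$ is even both $a=0$ and $a=1$ occur, giving $\langle\beta\rangle$ and $\langle\iota\beta\rangle$; to match~\eqref{meven} we must additionally exclude $H=G$. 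I would do this using $\mathrm{char}(k)\neq 2$: the inertia in any Galois cover has a normal wild $p$-Sylow with cyclic quotient, so since $G$ is abelian with cyclic $p$-Sylow (because $p$ is odd, the $p$-Sylow of $G=\langle\iota\rangle\times\langle\beta\rangle$ equals that of $\langle\beta\rangle$), every inertia group of $\varphi$ is cyclic; but $G$ itself is not cyclic when $m$ is even, so $H \neq G$.

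I expect the main obstacle to be the claim $\bar H \in \{\{1\},\Gb\}$ in the wild case $p\mid m$: ruling out $m>p$ requires both the classification of cyclic $p$-subgroups of $PGL_2(k)$ and a commutator/centralizer argument, whereas the tame case follows immediately from the diagonalizability of finite tame cyclic subgroups of $PGL_2(k)$.
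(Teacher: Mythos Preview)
Your proof is correct and follows essentially the same route as the paper's: both arguments reduce to showing that the image $\bar H$ of $H$ in $\Gb$ is either trivial or all of $\Gb$ (using the Kummer/Artin--Schreier dichotomy for cyclic automorphisms of $\BP^1$), and then enumerate the subgroups of $G$ with such image. The only notable difference is in case~\eqref{meven}: the paper observes directly that $m$ even and $\mathrm{char}(k)\neq 2$ force $p\nmid m$, hence $p\nmid 2m$, so all ramification in $\varphi$ is tame and $H$ is automatically cyclic; you instead invoke the general inertia structure (normal wild $p$-part with cyclic tame quotient) together with the fact that the $p$-Sylow of $G$ is cyclic. Both work, but the paper's tameness observation is shorter here, and it also makes your ``main obstacle'' (the wild case $p\mid m$) disappear in~\eqref{meven}: that case is only genuinely needed for~\eqref{modd}.
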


Before we begin the proof of the lemma we mention some facts about 
automorphisms of genus-$0$ curves that we will use repeatedly.

Suppose $\gamma$ is a finite-order automorphism of a genus-$0$ curve
$X$ over an algebraically-closed field~$k$.  By choosing an appropriate
isomorphism $X\cong\BP^1$, we may write the action of $\gamma$ on the
function field of $\BP^1$ in one of two forms: either $x\mapsto\xi x$
for a root of unity $\xi$, or $x\mapsto x+1$.  In the first case the
order of $\gamma$ is not divisible by the characteristic $p$ of $k$. 
Furthermore, the quotient map from $\BP^1$ to $\BP^1$ induced by 
$\gamma$ gives a Kummer extension of function fields $k(x)\to k(x)$ 
that can be written as $x\mapsto x^m$, where $m$ is the order
of~$\gamma$.  This map has two ramification points, and each point
ramifies totally.  When $\gamma$ can be written $x\mapsto x+1$ the 
order of $\gamma$ is equal to~$p$, and the associated quotient map 
$\BP^1\to\BP^1$ gives an Artin-Schreier extension of function fields 
$k(x)\to k(x)$ that can be written as $x\mapsto x^p-x$.  Only one point
of $\BP^1$ ramifies in this map, but again the ramification is total.

\begin{proof}[Proof of Lemma~\textup{\ref{L:ramgroups}}]
Suppose $m$ is odd, so that the Galois group $G$ is cyclic.  We know
that if a $Q$ ramifies in $\psi$, then it ramifies completely.  Thus,
the image of $H$ in $\Gb$ is either trivial or all of $\Gb$.  The only
subgroups of $G$ that have these images in $\Gb$ are the ones listed in
first statement of the lemma.

Suppose $m$ is even and the characteristic $p$ of $k$ is not $2$.
Since the automorphism $\betab$ of $\Cb$ has order $m$, and $m\neq p$,
the facts we mentioned before the start of the proof show that that $p$
does not divide~$m$.  Thus $p$ does not divide $\#G=2m$, so all 
ramification in $\varphi$ is tame.  In particular, the inertia group 
$H$ is cyclic.  The only cyclic subgroups of $G$ whose images in $\Gb$
are either trivial or all of $\Gb$ are the four groups listed in the
second statement.
\end{proof}

\begin{lemma}
\label{L:genera}
With notation and assumptions as above, let $g$ be the genus of $C$,
let $h$ be the genus of $D$, and let $e$ be the number of points of 
$\Db$ that ramify in both the right and the bottom maps of 
Diagram~\eqref{EQ:prop}.  Then $e\in\{0,1,2\}$.  If the characteristic
of $k$ is not $2$, then the relationship between $g$ and $h$ depends
on $e$ and on the parity of $m$ as follows\/\textup{:}
\smallskip
\begin{center}
\renewcommand{\arraystretch}{1.1}
\begin{tabular}{|c||l|l|}
\cline{2-3}
\multicolumn{1}{c||}{}
       & \quad $m$ odd                          & \quad $m$ even                         \\ \hline\hline
 $e=0$ & \quad $2h = (2g+2)/m - 2$ \hbox{\quad} & \quad $2h = (2g+2)/m - 2$ \hbox{\quad} \\ \hline
 $e=1$ & \quad $2h = (2g+1)/m - 1$              & \quad $2h = (2g+2)/m - 1$              \\ \hline
 $e=2$ & \quad $2h = 2g/m$                      & \quad $2h = (2g+2)/m$                  \\ \hline
\end{tabular}
\end{center}
\smallskip
If $k$ has characteristic~$2$, then $m$ is equal to $2$ if it is even,
and the relationship between $g$ and $h$ depends on $e$ and on the 
parity of $m$ as follows\/\textup{:}
\smallskip
\begin{center}
\renewcommand{\arraystretch}{1.1}
\begin{tabular}{|c||l|l|}
\cline{2-3}
\multicolumn{1}{c||}{}
       & \quad $m$ odd                          & \quad $m = 2$                                                        \\ \hline\hline
 $e=0$ & \quad $2h = (2g+2)/m - 2$ \hbox{\quad} & \quad $2h = g-1$                                                     \\ \hline
 $e=1$ & \quad $2h = (2g+1)/m - 1$              & \quad \rlap{$2h=g$}\phantom{$2h = g +1$} if $g$ is even,\hbox{\quad} \\ 
       &                                        & \quad $2h = g +1 $ if $g$ is odd                                     \\ \hline
 $e=2$ & \quad $2h = 2g/m$                      & \quad \textup{(}not possible\/\textup{)}                             \\ \hline
\end{tabular}
\end{center}
\end{lemma}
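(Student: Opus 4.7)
The plan is to classify, for each point $Q \in \Db$, the inertia group $H_Q$ in the cover $\varphi\colon C \to \Db$, and then assemble this local data into the genus relation via Riemann--Hurwitz. By Lemma~\ref{L:ramgroups}, applicable for $m$ odd in any characteristic and for $m$ even in characteristic not~$2$, $H_Q$ can be only one of a few specified subgroups of~$G$. The right map $D \to \Db$ ramifies at $Q$ if and only if $H_Q \not\subseteq \langle\beta\rangle$, and the bottom map $\psi$ ramifies if and only if $H_Q \not\subseteq \langle\iota\rangle$, so $e$ counts the $Q$ whose inertia fails both containments. Since $\psi$ is a Galois cover of $\BP^1$ by $\BP^1$ with cyclic group $\Gb$ of order~$m$, it has exactly two totally ramified points in the tame case ($p \nmid m$), giving $e \in \{0,1,2\}$; and exactly one totally ramified point in the wild case $p = m = 2$ (the only possibility for wild $\psi$, since $\betab$ has order $m$ on $\BP^1$), giving $e \in \{0,1\}$ and ruling out $e=2$.

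For the tame case I would apply Riemann--Hurwitz to the degree-$m$ cover $C \to D$. Every point of $C$ above $Q$ contributes $|H_Q \cap \langle\beta\rangle| - 1$ to $\deg \mathfrak{d}_{C/D}$, and there are $2m/|H_Q|$ such points. Letting $b$ and $c$ count points of $\Db$ with inertia $\langle\beta\rangle$ and of ``both'' type respectively, the constraint $b + c = 2$ together with $c = e$ reduces the computation to a short case split on whether $m$ is odd or even; substituting into Riemann--Hurwitz yields all the formulas in the first three rows of each table.  The asymmetry between the two columns comes from the fact that $\langle\iota\beta\rangle$ has order $m$ in the even-$m$ case while $G$ has order $2m$ in the odd-$m$ case, so the ``both'' type contributes differently to $\deg \mathfrak{d}_{C/D}$.

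The remaining case $\operatorname{char} k = 2$, $m = 2$ requires separate treatment. Here I would apply Riemann--Hurwitz to the Artin--Schreier double covers $C \to \Cb$ and $D \to \Db$ and use the birational isomorphism $C \cong \Cb \times_{\Db} D$ to transfer ramification data.  At every $Q \neq Q_\infty$ with $H_Q = \langle\iota\rangle$, the map $\psi$ is \'etale, so each such $Q$ contributes $(d_Q + 1)$ to $\deg \mathfrak{d}_{D/\Db}$ and $2(d_Q + 1)$ to $\deg \mathfrak{d}_{C/\Cb}$, where $d_Q$ is the common Artin--Schreier jump. At the unique wild point $Q_\infty$ of $\psi$, three subcases arise according to whether $H_{Q_\infty}$ equals $\langle\beta\rangle$ (contributes nothing to either different), $\langle\iota\beta\rangle$ (forces the jump of $D \to \Db$ at $Q_\infty$ to be~$1$, by a local compositum argument that the two ramified quadratic subextensions of $k((t))$ inside $k(C)$ must coincide), or $G$ (requires a direct Artin--Schreier pullback calculation showing that the jump $d_\infty$ of $C \to \Cb$ equals $2 d'_\infty - 1$, where $d'_\infty$ is the jump of $D \to \Db$). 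Eliminating the unknown jumps between the two Riemann--Hurwitz equations then produces $2h = g-1$, $g$, or $g+1$ as stated, with the parity of $g$ selecting which subcase can occur. The main obstacle is the last jump-transfer calculation: verifying that after expanding $t^{-d'_\infty}$ in the local uniformizer $s$ of $\Cb$ above $Q_\infty$ (where $t = s^2/(1+s)$) and absorbing squares into $\wp(k((s)))$, the reduced pole order in $s$ is the odd integer $2 d'_\infty - 1$ rather than the naive $2d'_\infty$.
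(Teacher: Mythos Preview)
Your tame-case strategy---applying Riemann--Hurwitz to the horizontal degree-$m$ cover $C\to D$ and reading off the different from the intersections $H_Q\cap\langle\beta\rangle$---is a genuine and clean alternative to the paper's method. The paper instead applies Riemann--Hurwitz to the two \emph{vertical} double covers $C\to\Cb$ and $D\to\Db$, and proves the lemma by comparing their discriminants $\discC$ and $\discD$: one shows that the part of $\discC$ sitting over points unramified in $\psi$ has degree exactly $m$ times the corresponding part of $\discD$, and then computes the local contributions over the (at most two) $\psi$-ramified points. Both routes give the tables with equal ease when they apply; yours is arguably more direct because it avoids tracking two discriminants simultaneously. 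Your characteristic-$2$, $m=2$ analysis (splitting on $H_{Q_\infty}\in\{\langle\beta\rangle,\langle\iota\beta\rangle,G\}$ and doing the jump-transfer $d_\infty=2d'_\infty-1$) is essentially the same computation the paper carries out in explicit Artin--Schreier coordinates.

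There is, however, a genuine gap. Your claim that ``$p=m=2$ is the only possibility for wild $\psi$'' is false: a finite-order automorphism of $\BP^1$ has order coprime to $p$ or equal to $p$, so wild $\psi$ occurs precisely when $m=p$, and $p$ can be any odd prime. In that case ($\operatorname{char} k=p>2$, $m=p$ odd) your tame different formula $|H_Q\cap\langle\beta\rangle|-1$ for $C\to D$ is wrong---the cover $C\to D$ has wild ramification over the unique $\psi$-branch point---and moreover $b+c=1$, not $2$. The paper's approach handles this case without extra work, because the vertical covers $C\to\Cb$ and $D\to\Db$ have degree $2$ and hence are tame whenever $\operatorname{char} k\neq 2$, regardless of whether $p\mid m$. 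To close the gap you should either switch to the paper's discriminant comparison for this one case, or compute the wild different of $C\to D$ at the Artin--Schreier point directly (which is doable but requires tracking the higher ramification jumps of $\langle\beta\rangle$).
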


\begin{proof}
We know that at most two points ramify in the cover $\psi:\Cb\to \Db$,
so it follows immediately that $e$ is at most~$2$.

Let $\diffC$ and $\diffD$ denote the differents of the double covers
$C\to \Cb$ and $D\to \Db$, respectively, and let $\discC$ and $\discD$
be the discriminants of these covers.  Note that we have
$\deg\discC = \deg\diffC$ and $\deg\discD = \deg\diffD$.  More
specifically, if $P$ is a point of $\Cb$ at which $\discC$ has positive
order, then there is a unique point $\frakp$ of $C$ over~$P$, and 
$\ord_\frakp\diffC = \ord_P\discC$; the analogous statement holds for 
points of $\Db$.  The Riemann-Hurwitz 
formula~\cite{goldschmidt}*{Thm.~3.3.5}, applied to the double covers
$C\to \Cb$ and $D\to \Db$, shows that 
\begin{alignat*}{2}
g &= -1 + (1/2)\deg\diffC &&= -1 + (1/2)\deg\discC \\
\intertext{and}
h &= -1 + (1/2)\deg\diffD &&= -1 + (1/2)\deg\discD.
\end{alignat*}
Therefore, to find the relationship between $g$ and $h$ we need only
find the relationship between the degrees of $\discC$ and $\discD$.

Before we turn to the various cases summarized in the tables in the 
statement of the lemma, we will sketch out the general method we use to
compare the degrees of these two discriminants.  Throughout this 
introductory sketch, we will assume that we are not in the special case
where $m$ is even and $k$ has characteristic $2$.

Suppose $P$ is a point of $\Cb$ that ramifies in the double cover
$C\to \Cb$.  By looking at the lists in Lemma~\ref{L:ramgroups} of the
possible ramification groups for the point $\psi(P)$ in the extension 
$\varphi:C\to \Db$, we see that $\psi(P)$ must ramify in the double 
cover $D\to \Db$.  In other words, the support of $\discC$ is contained
in the inverse image under $\psi$ of the support of $\discD$.

We divide the support of $\discD$ into two sets: Let $E$ be the set of
points of $\Db$ that ramify both in $D\to \Db$ and in $\psi$, and let 
$E'$ be the set of points of $\Db$ that ramify in $D\to \Db$ but not 
in~$\psi$.  Then we have $e =\#E$, and we set $e' := \# E'$.  Let
$\discC'$ be the part of $\discC$ supported on $\psi^{-1}(E')$, and 
let $\discD'$ be the part $\discD$ supported on~$E'$.

Suppose $Q$ is a point of $E'$, and let $P$ be one of the $m$ points in
$\psi^{-1}(Q)$.  Then locally at $P$ and at $Q$ the extensions 
$C\to \Cb$ and $D\to \Db$ are isomorphic, so the order of $\discC$ at
$P$ is equal to the order of $\discD$ at $Q$.  This shows that
$\deg\discC' = m\deg\discD'$.

All that remains is to find the relationship between the portion of
$\discC$ supported on $\psi^{-1}(E)$ and the portion of $\discD$ 
supported on $E$.

Suppose $Q$ is a point of $E$.   For each $i$ let $H_i$ be the $i$-th
ramification group of $Q$ in the double cover $D\to \Db$.
By~\cite{goldschmidt}*{Thm.~3.5.9}, the order of $\discD$ at $Q$ is
equal to $\sum (\#H_i - 1)$, but since each $H_i$ has order $1$ or~$2$,
the value of this sum is simply the largest $i$ such that $H_i$ is
nontrivial.  Let $\frakq$ be the point if $D$ lying over $Q$, and let 
$v$ be a uniformizer at $\frakq$.  According to 
\cite{goldschmidt}*{Lem.~3.5.6}, the largest value of $i$ such that
$H_i$ is nontrivial is  the valuation of $v - \iota^*v$ at $\frakq$.
Thus, $\ord_Q \discD = \val_\frakq(v-\iota^*v)$.

Let $P$ be the unique point of $\Cb$ with $\psi(P) = Q$.  If $P$ is 
unramified in the double cover $C\to \Cb$ then $\discC$ has order $0$
at $P$.  If $P$ is ramified, let $\frakp$ be the point of $C$ lying 
over it, and let $u$ be a uniformizer at $\frakp$.  Arguing as above,
we find that $\ord_P \discC = \val_\frakp(u-\iota^*u)$.

With these formulas for $\ord_Q \discD$ and $\ord_P \discC$ in hand,
we turn to the various cases listed in the lemma.

First suppose that the characteristic of $k$ is not~$2$ and  that $m$
is odd.  If $Q$ is a point in~$E$, then the inertia group of $Q$ in 
$\varphi$ must be $G$.  This shows that the unique point $P$ with 
$\psi(P)=Q$ is ramified in the double cover $C\to \Cb$.  Since the 
characteristic of $k$ is not~$2$, the point $P$ is tamely ramified.
Likewise, $Q$ is tamely ramified in $D\to \Db$.  Thus, $\discC$ has 
order $1$ at $P$ and $\discD$ has order $1$ at~$Q$.  It follows that
\[\deg\discC - e = \deg\discC' = m\deg\discD' = m(\deg\discD-e),\]
which gives $(2g+2-e) = m(2h+2-e)$, which is what is claimed in the
left-hand column of the first table in Lemma~\ref{L:genera}.

Suppose that the characteristic of $k$ is not~$2$ and that $m$ is even.
If $Q$ is a point of~$E$, then the inertia group of $Q$ in the cover 
$\varphi$ must be $\langle\iota\beta\rangle$.  In this case we see that 
the unique point $P$ of $\Cb$ with $\psi(P) = Q$ does \emph{not} ramify
in the double cover $C\to \Cb$.  This tells us that
\[\deg\discC  = \deg\discC' = m\deg\discD' = m(e + \deg\discD),\]
which leads to the entries in the right-hand column of the first table
in Lemma~\ref{L:genera}.

Now suppose that $k$ has characteristic $2$ and that $m$ is odd, and
suppose $Q$ is a point of $E$.  Let $P$ be the unique point of $\Cb$
with $\psi(P) = Q$.  The inertia group of $Q$ in the cover 
$\varphi:C\to \Db$ must be $G$, so $P$ is ramified in the double
cover $C\to \Cb$.  As above, let $\frakp$ be the point of $C$ lying
over $P$ and let $\frakq$ be the point of $D$ lying over~$Q$.  We would
like to compare the order of $\discC$ at $P$ to the order of $\discD$
at $Q$.  Since these are locally-defined quantities, we may replace the
curves in Diagram~\eqref{EQ:prop} with their completions at $\frakp$,
$\frakq$, $P$, and $Q$, respectively.  We can then choose a uniformizer
$u$ for $\frakp$ and a uniformizer $v$ for $\frakq$ such that 
$v = u^m$.

Let $i>1$ be the valuation at $\frakp$ of $u -\iota^*u$.  Then we can 
write
\[\iota^*u = u + cu^i + \textup{(higher-order terms)},\]
and raising both sides to the $m$-th power we find that
\[\iota^*v = v + mcu^{m-1+i} + \textup{(higher-order terms)}.\]
Since the valuation of $v-\iota^*v$ at $\frakp$ is $m-1+i$, the 
valuation $j$ of $v-\iota^*v$ at $\frakq$ must be $(m-1+i)/m$.  In
other words, $i = mj - m + 1$.  If we let $I$ denote the degree of the
portion of $\discC$ supported on $\psi^{-1}(E)$, and $J$ the degree of
the portion of $\discD$ supported on $E$, then $I = mJ - (m-1)e$.

We find that
\[\deg\discC = \deg\discC' + I 
             = m\deg\discD' + mJ - (m-1)e 
             = m\deg\discD -(m-1)e, \]
so that $\deg\discC - e = m(\deg\discD  - e)$, which again leads to
$2g+2-e = m(2h+2-e)$.  This gives us the formulas in the left-hand
column of the second table in Lemma~\ref{L:genera}.

Finally we consider the case where $k$ has characteristic $2$ and $m$
is even.  As we noted before the proof of Lemma~\ref{L:ramgroups}, an
even-order automorphism of a genus-$0$ curve in characteristic $2$ must
have order~$2$, so $m=2$.  Once again, we define $E$ to be the set of
points of $\Db$ that ramify in $\psi:\Cb\to \Db$ and in the double 
cover $D\to \Db$ (so that $e=\#E$), and we define $E'$ to be the set of
points that ramify in $D\to \Db$ but not in $\psi$.  As before, we 
define $\discC'$ to be the part of $\discC$ supported on 
$\psi^{-1}(E')$ and $\discD'$ to be the part of $\discD$ supported 
on~$E'$, and as before, we have $\deg\discC' = 2\deg\discD'$.

Since $m=2$, the map $\psi$ is ramified at a single point, and 
$e\le 1$.  If $e=0$ then
\[\deg\discC = \deg\discC' = 2\deg\discD' = 2\deg\discD,\]
and it follows from the Riemann-Hurwitz formula that $2h = g-1$, as 
claimed in the second table in Lemma~\ref{L:genera}.

On the other hand, if $e=1$ we may choose isomorphisms $\Cb\cong \BP^1$
and $\Db\cong\BP^1$ so that $\psi$ corresponds to the function field 
map $x\mapsto x^2 + x$; then $\infty$ ramifies in the double cover 
$D\to \Db\cong\BP^1$.  The function field $k(D)$ of $D$ is an
Artin-Schreier extension of $k(x)$, so it contains an element $y$ not
in $k(x)$ such that $y^2 + y$ lies in $k(x)$.  The completion of $k(x)$
at $\infty$ is the ring of Laurent series in $1/x$, and in this 
completion we can write 
\[y^2 + y = a_n x^n + \sum_{i=-\infty}^{n-1} a_i x^i\]
for some integer $n$ and elements $a_i$ of $k$ with $a_n\neq 0$.
If $n = 0$ we can replace $y$ with $y+b$ for a constant $b\in k$ with
$b^2 + b = a_0$, which has the effect of replacing $n$ with nonzero 
integer.  If $n$ is even and nonzero we may replace $y$ with 
$y + \sqrt{a_n} x^{n/2}$, which has the effect of replacing $n$ by
$n/2$;  repeating this reduction, we find that we may assume that $n$
is odd, say $n=2m-1$.  If $n$ were negative the point $\infty$ would 
split in $D$, contrary to assumption, so $n$ must be positive.

Let $\frakq$ be the unique point of $D$ lying over $\infty$.  It is
easy to check that $v = y/x^m$ is a uniformizer for $\frakq$.  As
before, the order of $\deg\discD$ at $\infty$ is equal to the valuation
of $v - \iota^*v$ at $\frakq$.  Since $\iota^* v = (y+1)/x^m$, we find 
that 
\[\val_\frakq(v - \iota^*v) = \val_\frakq (1/x^m) 
                            = 2\val_\infty(1/x^m) = 2m.\]

Now consider the curve $C$, which is the fiber product of the double 
cover $D\to \Db$ with $\psi:\Cb\to \Db$.  Locally at $\infty$, we
obtain $C$ by taking the equality
\[y^2 + y = a_n x^n + \sum_{i=-\infty}^{n-1} a_i x^i\]
and replacing $x$ with $x^2 + x$.  This gives us
\[y^2 + y = a_n x^{2n} + a_nx^{2n-1} + \text{(terms in $x^i$ with $i<2n-1$)}.\]
If $n>1$, then replacing $y$ with $y + \sqrt{a_n}x^n$ gives us
\[y^2 + y = a_nx^{2n-1} + \text{(terms in $x^i$ with $i<2n-1$)},\]
and we find that $\ord_\infty\discC = 2n = 4m-2$.  But if $n=1$, the
same substitution gives
\[y^2 + y = (a_n + \sqrt{a_n})x + \text{(terms in $x^i$ with $i<1$)}.\]
In this case, $\ord_\infty \discC = 2 = 4m-2$ if $a_n \neq \sqrt{a_n}$,
and $\ord_\infty \discC = 0 = 4m-4$ otherwise.

Applying the Riemann-Hurwitz formula to the double covers $D\to \Db$
and $C\to \Cb$ and using the relation $\deg\discC' = 2\deg\discD'$, we
find that
\begin{align*}
2h &= \deg\discD' + 2m - 2\\
g  &= \begin{cases}
          \deg\discD' + 2m - 2 & \text{if $\infty$ ramifies in $C\to \Cb\cong\BP^1$;} \\
          \deg\discD' + 2m - 3 & \text{if $\infty$ is unramified in $C\to \Cb\cong\BP^1$.}
      \end{cases}
\end{align*}      
It follows that 
\[
2h = \begin{cases}
        g   & \text{if $\infty$ ramifies in $C\to \Cb\cong\BP^1$;} \\
        g+1 & \text{if $\infty$ is unramified in $C\to \Cb\cong\BP^1$.}
     \end{cases}
\]       
Clearly $g$ is even in the first case and odd in the second, so we get
the result given in the second column of the second table of
Lemma~\ref{L:genera}.
\end{proof}

\begin{remark}
One could also prove Lemma~\ref{L:genera} by using explicit equations 
and the standard formulas for the genus of a hyperelliptic curve in 
terms of its defining 
equation~\cite{goldschmidt}*{Cor.~3.6.3, Cor.~3.6.9}.  For example, 
suppose that the characteristic $p$ of the base field is not $2$, that
$m$ is odd and not divisible by $p$, and that $e=2$.  By choosing
appropriate isomorphisms $\Cb\cong\BP^1\cong\Db$ and an appropriate 
model for $D$, we can assume that $\psi$ is the map $x\mapsto x^m$ and 
that $D$ is given by $y^2 = x f(x)$ for a separable even-degree 
polynomial $f(x)$ with $f(0)\neq 0$.  Then $C$ has a singular model of
the form $y^2 = x^m f(x^m)$ and a nonsingular model of the form 
$z^2 = x f(x^m)$.  In this case one checks that $h = (\deg f )/2$ 
and $g = (m\deg f)/2$, so $2h = 2g/m$, as claimed by the lemma.
\end{remark}

\section{Proof of Theorem~\ref{T:main}.}
\label{S:proof}

In this section we prove Theorem~\ref{T:main}.  Let us begin by 
explaining the basic idea of the proof.

Since the conclusions of Theorem~\ref{T:main} are completely geometric,
we may assume that $k$ is algebraically closed.  The characteristic
polynomial $f$ of $\alpha^*$ has degree $2g$; let its complex roots be
$\zeta_1,\ldots,\zeta_{2g}$, so that the $\zeta$ are all $n$-th roots
of unity.  For each divisor $d$ of $n$ let $N_d$ denote the number of
the $\zeta$ that are primitive $d$-th roots of unity and let $M_d$
denote the number of the $\zeta$ that satisfy $\zeta^d = 1$.  Then we
have
\begin{equation}
\label{EQ:fundamental}
M_d  = \sum_{e\mid d} N_e,\qquad
N_d  = \sum_{e\mid d} \mu(d/e)M_e,\text{\quad and\quad}
f    = \prod_{d\mid n} \Phi_d^{N_d/\phi(d)},
\end{equation}
where $\mu$ is the M\"obius function, $\phi$ is the Euler 
$\phi$-function, and $\Phi_d$ is the $d$-th cyclotomic polynomial.
So to determine $f$, it is enough to determine the $M_d$.

For every divisor $d$ of $n$, let $f_d$ be the characteristic polynomial
of the automorphism $\alpha^d$ of $C$.  Then the complex roots of $f_d$
are the $d$-th powers of the complex roots of $f$, so $M_d$ is equal to
the multiplicity of $1$ as a root of $f_d$.  This multiplicity is equal
to twice the dimension of the part of the Jacobian on which $\alpha^d$
acts trivially, and this dimension is equal to  the genus of the
quotient of $C$ by $\langle\alpha^d\rangle$.  We see that computing
$M_d$ is equivalent to computing the genus of this quotient curve.  If
the hyperelliptic involution $\iota$ lies in $\langle\alpha^d\rangle$
then the genus of the quotient is~$0$; if not, then the genus is
determined by Lemma~\ref{L:genera}.  To prove the theorem, all we must
do is verify that the values of $M_d$ predicted by the putative 
characteristic polynomials given in the theorem agree with the values 
we compute by applying Lemma~\ref{L:genera}.  

We consider the four statements of the theorem in turn.

\emph{Proof of Statement}~\eqref{itemb}:
In this case our assumption is that $n=\nb$ and $\nb$ is odd.  For each
divisor $d$ of $n$ let $D_d$ be the quotient of $C$ by 
$\langle\alpha^d\rangle$.  Then we have a diagram
\begin{equation}
\label{EQ:itemb}
\xymatrix{
C \ar[d]^2 \ar[rr]^{n/d} &&  D_d\ar[d]^2\ar[rr]^d && D_1\ar[d]^2\\
\Cb\ar[rr]^{n/d}         && \Db_d       \ar[rr]^d && \Db_1.
}
\end{equation}
As in Section~\ref{S:quotients}, we see that the map $\varphi$ from $C$
to $\Db_1$ is a Galois cover with group 
$G = (\BZ/n\BZ)\times(\BZ/2\BZ)$.  Let $\varphi_d$ be the map from $C$
to $\Db_d$, let $E_d$ be the set of points of $\Db_d$ that ramify going 
up to $D_d$ and going up $\Cb$, and let $e_d = \#E_d$.  We will show 
that $e_d$ is determined by~$e_1$.

Since $\nb$ is odd, Lemma~\ref{L:ramgroups} tells us that the inertia
group of a point of $\Db_1$ in the extension $\varphi:C\to\Db_1$ is
either trivial, or $\langle\iota\rangle$, or $\langle\alpha\rangle$, 
or all of $G$.  A point in $E_1$ must have ramification group $G$, 
and so must lie under a unique point in $E_d$.  Likewise, any point
in $E_d$ must lie over a point of $\Db_1$ that has ramification 
group~$G$, and that therefore lies in $E_1$. Thus, for every $d$ we 
have $e_d = e_1$.

If $e_1=0$ then Lemma~\ref{L:genera} shows that $M_d = (2g+2)d/\nb - 2$
for all $d$.  In particular we see that $\nb$ divides $2g+2$.  Also, we
check that the polynomial $f=(x^\nbar-1)^{(2g+2)/\nbar}/(x-1)^2$ 
produces the correct values of $M_d$.  If $e_1=1$ then we have
$M_d = (2g+1)d/\nb - 1$, so $\nb$ divides $2g+1$, and the polynomial
$f=(x^\nbar-1)^{(2g+1)/\nbar}/(x-1)$ gives the correct values of~$M_d$.
Finally, if $e_1 = 2$ then $M_d = 2gd/\nb$, so that $\nb$ divides $2g$,
and the polynomial $f = (x^\nbar - 1)^{2g/\nbar}$ produces the required
values of $M_d$.

\emph{Proof of Statement}~\eqref{itema}:
In this case $n=2\nb$ and $\nb$ is odd, and we see that 
$\iota = \alpha^\nbar$.  Let $\alpha_0 = \iota\alpha$, so that 
$\alpha_0$ has order $\nb$ and induces an automorphism of order $\nb$
on~$\Cb$.  Then Statement~\eqref{itemb} tells us the characteristic 
polynomial $f_0$ of $\alpha_0^*$; furthermore, since 
$\alpha^* = -\alpha_0^*$, we have $f(x) = f_0(-x)$.  This agrees with
what is claimed in Statement~\eqref{itema}.

\emph{Proof of Statement}~\eqref{itemd}:
In this case $n=\nb$ and $\nb$ is even, and the analysis is very much
like that for Statement~\eqref{itemb}.  For every divisor $d$ of $n$
we let $D_d$ be the quotient of $C$ by $\langle\alpha^d\rangle$, and
Diagram~\eqref{EQ:itemb} is again a diagram of Galois extensions, with
the total Galois group $G$ being $(\BZ/n\BZ)\times(\BZ/2\BZ)$.  However,
since $n$ is even, $G$ is no longer a cyclic group.  As before, we let
$\varphi$ be the map from $C$ to $\Db_1$, we let $\varphi_d$ be the map
from $C$ to $\Db_d$, we let $E_d$ be the set of points of $\Db_d$ that
ramify going up to $D_d$ and going up to $\Cb$, and we let 
$e_d = \#E_d$.

Let us first consider the case in which the characteristic of the base 
field is not equal to $2$.  Then according to Lemma~\ref{L:ramgroups}, 
the ramification group of a point $Q$ of $\Db_1$ in the cover $\varphi$
is either trivial, the group $\langle\iota\rangle$, the group 
$\langle\alpha\rangle$, or the group $\langle\iota\alpha\rangle$.  If a
point has one of the first two inertia groups it will not lie in $E_d$,
because it is not ramified in the bottom row of
Diagram~\eqref{EQ:itemb}.  If a point has inertia group 
$\langle\alpha\rangle$, then it will not lie in $E_d$ because it is not
ramified in the extension $D_d\to\Db_d$.  But if a point has inertia
group $\langle\iota\alpha\rangle$, it will lie in $E_d$ if $d$ is odd, 
and will not lie in $E_d$ if $d$ is even.

So when the characteristic of the base field is not~$2$, we see once
again that the value of $e_d$ is determined by the value of $e_1$: We 
have $e_d=e_1$ if $d$ is odd, and $e_d=0$ if $d$ is even.  If $d$ is
odd then $n/d$ is even, so Lemma~\ref{L:genera} tells us that 
$M_d = (2g+2)d/n -2 + e_1$.  (Note that since $M_1$ is twice the genus
of~$D_1$, we find that $n$ divides $2g+2$ and that the parity of $e_1$
is equal to the parity of $(2g+2)/n$.)  If $d$ is even then 
Lemma~\ref{L:genera} shows that $M_d = (2g+2)d/n - 2$.  These values
for $M_d$ are consistent with 
\[
f = \begin{cases}
    \displaystyle \frac{(x^n-1)^{(2g+2)/n}}{(x-1)^2} & \text{if $e_1 = 0$;}\\[1em]
    \displaystyle \frac{(x^n-1)^{(2g+2)/n}}{(x^2-1)} & \text{if $e_1 = 1$;}\\[1em]
    \displaystyle \frac{(x^n-1)^{(2g+2)/n}}{(x+1)^2} & \text{if $e_1 = 2$,}
    \end{cases}
\]    
so these must be the correct values of $f$.

As we noted above, the parity of $(2g+2)/n$ is equal to that of $e_1$.
Thus, if $(2g+2)/n$ is odd then $e_1=1$, and we find the value of $f$
given in Statement~\eqref{itemd1}.  If $(2g+2)/n$ is even then $e_1$ is
either $0$ or $2$, and we find that $f$ must have one of the two values
given in Statement~\eqref{itemd2}.

Finally, we turn to the case in which the base field has
characteristic~$2$.  In this case we must have $n=2$, so we only have 
to determine the value of $M_1$ (since we already know that 
$M_2 = 2g$).  But Lemma~\ref{L:genera} tells us the possibilities for 
this value:  If $g$ is even then $M_1 = g$, while if $g$ is odd then
$M_1$ is either $g-1$ or $g+1$.  We check that the values of $f$ given
in Statements~\eqref{itemd1} and~\eqref{itemd2} agree with these values
of $M_1$ and~$M_2$.

\emph{Proof of Statement}~\eqref{itemc}:
In this case $n=2\nb$ and $\nb$ is even, and we have 
$\iota = \alpha^\nbar$.  Taking the quotient of $C$ by 
$\langle\alpha\rangle$ gives us a Galois extension
\begin{equation}
\label{EQ:itemc}
\xymatrix{
C \ar[rr]^{2} &&  \Cb\ar[rr]^\nbar && \Db
}
\end{equation}
with group $G=\BZ/n\BZ$, where $\Cb$ and $\Db$ are curves of genus~$0$.

Consider a point $Q$ of $\Db$ that ramifies going up to~$\Cb$.  Then
$Q$ must be totally ramified in this extension, so the inertia group of
$Q$ in the total extension $C\to\Db$ is a subgroup of $G$ that surjects
onto the Galois group of $\Cb\to\Db$.  The only such subgroup is $G$
itself, so any point of $\Db$ that ramifies going up to $\Cb$ must 
ramify totally in the extension $C\to\Db$.

We see that if $d$ is a divisor of $n$ such that $n/d$ is even, then 
$\iota$ lies in the subgroup $\langle\alpha^d\rangle$, the genus of the
quotient of $C$ by this subgroup is $0$, and $M_d=0$.  If $d$ is a 
divisor of $n$ such that $n/d$ is odd, let $e_d$ be the number $e$ 
associated to $\alpha^d$ as in Lemma~\ref{L:genera}; then $e_d$ is 
equal to the number of points of $\Db$ that ramify in the degree-$\nb$
extension $\Cb\to\Db$, and this value is either $1$ or $2$, depending
on whether or not $\nb$ is divisible by the characteristic of the base 
field.

Suppose the characteristic of the base field is not equal to~$2$.  
Then, since $\nb$ is even, the degree-$\nb$ map $\Cb\to\Db$ of 
genus-$0$ curves does not give an Artin-Schreier extension of function
fields;  rather, it gives a Kummer extension, and it follows that there
are two points of $\Db$ that totally ramify going up to $C$.  Any other
points of $\Db$ that ramify going up to $C$ must have ramification 
groups of order $2$.  If there are $r$ of these points, then the 
Riemann-Hurwitz formula for Galois extensions tells us that 
\[2g-2 = n \left(-2 + 2(1 - 1/n) + r(1-1/2)\right)= -2 + r\nb,\]
and it follows that $\nb$ divides $2g$.

Also, since we have two points of $\Db$ that ramify totally in 
$C\to\Db$, we see that $e_d=2$ whenever $n/d$ is odd, and it follows
from Lemma~\ref{L:genera} that $M_d = 2g d/n$ when $n/d$ is odd.
Combining this with the observation that $M_d=0$ when $n/d$ is even,
we see that the polynomial $f = (x^\nbar+1)^{2g/\nbar}$ gives the
correct values of $M_d$.

Now suppose that the characteristic of the base field is equal to~$2$.
Then $\nb$ must be equal to $2$, and $\alpha$ has order $4$.  The 
diagram~\eqref{EQ:itemc} shows that the quotient of $C$ by
$\langle\alpha\rangle$ has genus $0$, the quotient of $C$ by 
$\langle\alpha^2\rangle$ has genus $0$, and the quotient of $C$ by 
$\langle\alpha^4\rangle$ has genus $g$.  Thus $M_1=M_2=0$ and $M_4=2g$.
The polynomial that gives rise to these values of $M_d$ is $(x^2+1)^g$,
which is the polynomial given in Statement~\eqref{itemc}.
\qed




\begin{bibdiv}
\begin{biblist}

\bib{HNR}{article}{
   author={Howe, Everett W.},
   author={Nart, Enric},
   author={Ritzenthaler, Christophe},
   title={Jacobians in isogeny classes of abelian surfaces over 
          finite fields},
   eprint={arXiv:math/0607515v3 [math.NT]},
   date={2006},
}
	
\bib{goldschmidt}{book}{
   author={Goldschmidt, David M.},
   title={Algebraic functions and projective curves},
   series={Graduate Texts in Mathematics},
   volume={215},
   publisher={Springer-Verlag},
   place={New York},
   date={2003},
}

\end{biblist}
\end{bibdiv}

\end{document}